\def\qed{{\hbadness=10000\hfill\ \vbox{\hrule height.09ex
     \hbox{\vrule width.09ex height1.55ex depth.2ex \kern1.8ex
     \vrule width.09ex height1.55ex depth.2ex}\hrule height.09ex}\break
     \bigskip}}
\newtheorem{theorem}{Theorem}[section]
\newtheorem{lemma}[theorem]{Lemma}
\newtheorem{proposition}[theorem]{Proposition}
\newtheorem{remark}[theorem]{Remark}
\begin{document}
\title{Some Characterizations of a Normal Subgroup of a Group}

\author{Vipul Kakkar\footnote{The first author is supported by CSIR, Government of India.}
~ and R.P. Shukla\\
Department of Mathematics, University of Allahabad \\
Allahabad (India) 211 002\\
Email: vplkakkar@gmail.com; shuklarp@gmail.com}

\date{}
\maketitle

\begin{abstract}
Let $G$ be a group and $H$ be a subgroup of $G$ which is either finite or of finite index in $G$. In this note, we give some characterizations for normality of $H$ in $G$.  As a consequence we get a very short and elementary proof of the Main Theorem of \cite{rjpf}, which avoids the use of the classification of finite simple groups.
\end{abstract}
\noindent \textbf{\textit{Key words:}} Right loop, Normalized Right Transversal, Right Inverse Property.
\section{Introduction}
Let $G$ be a group and $H$ be a subgroup of $G$. A \textit{normalized right transversal (NRT)} $S$ of $H$ in $G$ is a subset of $G$ obtained by choosing one and only one element from each right coset of $H$ in $G$ and $1 \in S$. Then $S$ has a induced binary operation $\circ$ given by $\{x \circ y\}=Hxy \cap S$, with respect to which $S$ is a right loop with identity $1$, that is, a right quasigroup with both sided identity  (see \cite[Proposition 4.3.3, p.102]{smth},\cite{rltr}). Conversely, every right loop can be embedded as an NRT in a group with some universal property (see \cite[Theorem 3.4, p.76]{rltr}).

Let $\mathcal{T}(G, H)$ denote the set of all NRTs (normalized right transversals) of $H$ in $G$. We say that $S$ , $T \in \mathcal{T}(G, H)$ are isomorphic, if their induced right loop structures are isomorophic. If $H$ is normal subgroup in $G$, then each $S \in \mathcal{T}(G, H)$ is isomorphic to the quotient group $G/H$. The converse of this statement was proved in \cite[Main Theorem, p.643]{rjpf} for finite groups:

\begin{theorem}[Main Therem \cite{rjpf}]\label{th1}
Let $G$ be a finite group and $H$ a subgroup of $G$. If all NRTs of $H$ in $G$  are isomorphic, then $H$ is normal in $G$.
\end{theorem}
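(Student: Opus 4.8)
The plan is to prove the contrapositive: assuming $H$ is not normal in $G$, I will produce two NRTs of $H$ in $G$ whose induced right loops are not isomorphic.

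First I would pass to the core-free case. Let $N=\bigcap_{g\in G}gHg^{-1}$ be the normal core of $H$. The right cosets of $H$ in $G$ are canonically the right cosets of $H/N$ in $G/N$, and since $s\circ t$ is by definition the representative of $Hst$ and $N\le H$, this correspondence carries each NRT $S$ of $H$ in $G$ to an NRT of $H/N$ in $G/N$ by a right-loop isomorphism ($s\mapsto sN$). As $H$ is not normal, $H/N$ is a nontrivial, core-free, non-normal subgroup of $G/N$, so I may assume $H$ is core-free; equivalently, $G$ acts faithfully on the finite set of right cosets of $H$.

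Next I would set up an invariant, taking the \emph{right inverse property} (RIP) as the leading candidate. For an NRT $S$ write $p(g)$ for the representative in $S$ of $Hg$, so $x\circ y=p(xy)$. A direct computation gives $(y\circ x)\circ x'=p(yxx')$, hence $x'$ witnesses RIP for $x$ iff $Hyxx'=Hy$ for all $y$, i.e.\ iff $xx'\in\bigcap_{g\in G}gHg^{-1}=N=1$. So in the core-free case $(S,\circ)$ has RIP precisely when $S$ is closed under inversion in $G$; and every NRT in the normal case trivially has RIP, consistently with $S\cong G/H$. Now, since $H$ is not normal, there is an element $g\notin H$ with $g^2\neq 1$: otherwise, for $g\notin H$ and $h\in H$ the element $gh\notin H$ would be an involution, so $(gh)^2=1$, forcing $ghg^{-1}=h^{-1}$ and hence $gHg^{-1}=H$ for all $g\notin H$, i.e.\ $H\trianglelefteq G$. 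Using such a $g$ one builds an NRT $S_0$ that is not closed under inversion (let $g$ represent $Hg$; since $g^{-1}\neq g$ one may arrange that $g^{-1}$ is not the chosen representative of its own coset), so $S_0$ lacks RIP. If $H$ admits a symmetric (inversion-closed) NRT $T$, then $T$ has RIP while $S_0$ does not, so $S_0\not\cong T$ and we are done.

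The main obstacle is that a symmetric NRT need not exist: a coset $Hg$ that is closed under inversion but contains no involution blocks it — for instance $\{(1234),(1432)\}$ for $H=\langle(13)(24)\rangle$ in $S_4$ — and such subgroups also admit no NRT whose right loop is a group (being a group would force $\langle S\rangle$ to be a complement of $H$). In this remaining case I would instead separate the two transversals by a finer conjugacy-invariant of right loops that still takes its ``group'' value when $H$ is normal: for example the isomorphism type of the right multiplication group $\langle R_x:x\in S\rangle$ together with its point stabilizer, or — when even that is independent of $S$ — the multiset $\bigl(\#\{y\in S:x\circ y=1\}\bigr)_{x\in S}$ of numbers of right inverses, which equals $(1,\dots,1)$ exactly when $(S,\circ)$ is a group. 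The technical heart is then to show that, $H$ being non-normal, some choice of representatives makes one of these invariants differ from its value on another NRT; the natural lever is a coset $Hg$ with $gHg^{-1}\neq H$, perturbing its representative so that associativity, or the existence of inverses, is disrupted.
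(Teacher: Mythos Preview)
Your argument is incomplete, and you have correctly located the gap yourself: when no inversion-closed NRT exists---and your example $H=\langle(13)(24)\rangle\le S_4$ shows this really occurs---the RIP invariant cannot separate any two transversals, and the fallback invariants you list are not shown to do the job. The sentence beginning ``The technical heart is then to show that\ldots'' is a statement of what remains to be proved, not a proof.

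The paper closes exactly this gap by replacing RIP with a strictly weaker right-loop invariant: the property that every $x\in S$ has \emph{some} right inverse, i.e.\ that $x\circ y=1$ is solvable for each $x$. An elementary observation (Proposition~\ref{p1}) shows this holds precisely when $S$ is a \emph{two-sided} transversal of $H$. Two facts then finish the argument at once: (i)~a two-sided transversal always exists when $|H|$ or $[G:H]$ is finite (classical: Chapman, Miller, Zassenhaus, Ore), so the invariant is actually attained by some NRT; and (ii)~if $H$ is not normal one writes down an NRT that is not a left transversal directly---choose $x$ with $xH\ne Hx$, pick $y\in xH\setminus Hx$, and place both $x$ and $y$ (distinct right cosets, same left coset) into $S$. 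No core-free reduction is needed. Your last suggestion, the multiset $\bigl(\#\{y:x\circ y=1\}\bigr)_{x\in S}$, is within an inch of this idea; the decisive threshold is not ``$=1$ everywhere'' but ``$\ge 1$ everywhere'', and the missing ingredient is precisely the classical existence theorem for two-sided transversals.
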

  
 The proof of the Main Theorem in \cite{rjpf} used the classification of finite simple groups (the knowledge of order of automorphism groups of finite non-abelian simple groups). In this note, we obtain an elementary short proof of Theorem \ref{th1}, which avoids the use of the classification of finite simple groups.

Let $S \in \mathcal{T}(G, H)$. For $x \in S$, we denote the map $y\mapsto y \circ x$ $(y \in S)$ by $R_x$, where $\circ$ is the binary operation on $S$ defined in the first paragraph of Section 1. We say that (1) $S$ has \textit{right inverse property(RIP)}, if there is a map $r:S\rightarrow S$ such that $R_x^{-1}=R_{r(x)}$, for all $x \in S$, (2) $S$ is \textit{right conjugacy closed (RCC)}, if for each pair $(x,y) \in S \times S$ there exists $z \in S$ such that $R_xR_yR_x^{-1}=R_z$, (3) $S$ is \textit{$A_r$-transversal} if $H \subseteq N_G(S)$, where $N_G(S)$ denotes the normalizer of $S$ in $G$. Now, we state the main result of this note: 

\begin{theorem}\label{mth} Let $H$ be subgroup of $G$ such that either the order $\left|H\right|$ of $H$ or the index $[G:H]$ is finite. Then following are equivalent:
\begin{enumerate}
\item $H$is a normal subgroup of $G$.
\item  All $S \in \mathcal{T}(G, H)$ are both sided transversals.
\item  All $S \in \mathcal{T}(G, H)$ are isomorphic.
\item  All $S \in \mathcal{T}(G, H)$ have RIP.
\item  All $S \in \mathcal{T}(G, H)$ are RCC.
\end{enumerate}
\end{theorem}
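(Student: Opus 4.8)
The plan is to prove $(1)\Rightarrow(j)$ for $j\in\{2,3,4,5\}$ directly, and then each of the four reverse implications $(j)\Rightarrow(1)$ separately; the finiteness hypothesis will be needed only for $(3)\Rightarrow(1)$. The forward implications are cheap: if $H\triangleleft G$, then every right coset equals a left coset, giving $(2)$, while (as recalled in the introduction) every $S\in\mathcal{T}(G,H)$ is isomorphic to the group $G/H$, and a group has RIP (take $r(x)=x^{-1}$) and is RCC ($R_xR_yR_x^{-1}=R_{x^{-1}yx}$) and of course all such $S$ are mutually isomorphic, yielding $(3),(4),(5)$.

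For $(2)\Rightarrow(1)$ I argue by contraposition via double cosets. If $H$ is not normal, then $gHg^{-1}\neq H$ for some $g$; since $H\subseteq gHg^{-1}$ for every $g$ would force (applying this to $g^{-1}$) $gHg^{-1}=H$ for all $g$, we may choose $g$ with $H\not\subseteq gHg^{-1}$, so the double coset $HgH$ contains two distinct right cosets $Hx_1=Hg$ and $Hx_2$, say $x_2=h_1x_1h_2$ with $h_i\in H$ (and $Hx_1,Hx_2\neq H$). Choosing $h_1x_1$ as the representative of $Hx_1$, $x_2$ as the representative of $Hx_2$, $1$ for $H$, and arbitrary representatives elsewhere, produces $S\in\mathcal{T}(G,H)$ with $h_1x_1H=x_2H$, so $S$ fails to be a left transversal. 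No finiteness is used here.

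For $(4)\Rightarrow(1)$ and $(5)\Rightarrow(1)$ I would pass through the core $H_G=\bigcap_g gHg^{-1}$ and the action homomorphism $G\to\mathrm{Sym}(G/H)$, under which $R_x$ is the permutation induced by $x$. A short computation gives: $S$ has RIP iff for every $x\in S$ the representative in $S$ of $Hx^{-1}$ lies in the smaller coset $H_Gx^{-1}$; and $S$ is RCC iff for all $x,y\in S$ the representative of $Hxyx^{-1}$ lies in $H_Gxyx^{-1}$. If $H$ is not normal then $H_G\subsetneq H$, so each $H_Gw$ is a proper subset of $Hw$; hence whenever one can pick $x$ (resp. $x,y$) so that the relevant coset differs from $H$ and $Hx$ (resp. from $H,Hx,Hy$), one chooses that representative outside $H_Gw$ and RIP (resp. RCC) fails for the resulting $S$. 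If no such choice exists, a little group theory forces $G/H_G$ to have exponent $2$, hence be abelian (for RIP), or to satisfy $[G,G]\subseteq H$ (for RCC), so $H/H_G\triangleleft G/H_G$ and $H\triangleleft G$; a couple of small-index configurations (notably $[G:H]=3$ for RCC) are dispatched by direct inspection after reducing to the core-free quotient.

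The crux is $(3)\Rightarrow(1)$. First reduce to the core-free case: $H_G\triangleleft G$, $H$ is normal in $G$ iff $H/H_G$ is normal in $G/H_G$, and the natural map $\mathcal{T}(G,H)\to\mathcal{T}(G/H_G,H/H_G)$, $S\mapsto\{sH_G:s\in S\}$, is surjective and carries each induced right loop isomorphically onto its image, hence induces a bijection on isomorphism classes of NRTs. So assume $H$ is core-free; now the finiteness hypothesis bites, since $[G:H]<\infty$ forces $G\hookrightarrow\mathrm{Sym}(G/H)$ to be finite, while $|H|<\infty$ is retained. Suppose for contradiction that $H$ is not normal but all NRTs are isomorphic. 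By the $(4)\Rightarrow(1)$ argument there is an NRT $S_0$ without RIP, and the remaining task is to exhibit an NRT $S_1$ that \emph{does} have RIP: for core-free $H$ this is an inverse-closed transversal $S_1=S_1^{-1}$, built by pairing each right coset $Hc$ with the coset meeting $c^{-1}$ and matching up the representatives, the self-paired cosets (those with $c^2\in H$) handled by a separate finiteness-based argument. Since RIP is an isomorphism invariant of right loops, $S_0\not\cong S_1$, contradicting $(3)$. The main obstacle is precisely this construction of an inverse-closed (RIP) transversal in the finite setting — without finiteness such a transversal need not exist, which is exactly why the hypothesis is imposed — and getting the matched choice of representatives to go through, above all across the self-paired cosets, is where whatever subtlety the theorem has is concentrated.
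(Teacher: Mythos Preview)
Your argument for $(2)\Rightarrow(1)$ is essentially the paper's own Lemma (contrapositive: if $H$ is not normal, build an NRT that is not a left transversal), so that part matches.

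The main divergence, and the gap, is in how you handle $(3),(4),(5)$. The paper does not attempt separate implications back to $(1)$; it routes all three through $(2)$ via one elementary fact (quoted from Klim--Majid and Kuznetsov): an NRT $S$ is a both-sided transversal if and only if for every $x\in S$ the equation $x\circ X=1$ has a solution in $S$. Granting this, $(4)\Rightarrow(2)$ is one line (RIP gives $x\circ r(x)=1$), $(5)\Rightarrow(2)$ is one line (RCC applied to the pair $(x,x')$ with $x'$ the left inverse of $x$ yields $z$ with $x\circ z=1$), and $(3)\Rightarrow(2)$ follows because the finiteness hypothesis guarantees the existence of at least one both-sided transversal (the classical theorem of Miller, Chapman, Zassenhaus, Ore) and the displayed solvability condition is visibly preserved by right-loop isomorphisms.

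Against this, your core-based treatments of $(4)\Rightarrow(1)$ and $(5)\Rightarrow(1)$ are workable in spirit but carry loose ends: from ``$x^{2}\in H$ for all $x$'' one does obtain $H\trianglelefteq G$, but not by the route ``$G/H_G$ has exponent $2$'' that you state; and your RCC endgame is left to unspecified ``direct inspection''. More seriously, your $(3)\Rightarrow(1)$ is not a proof: you reduce to constructing an inverse-closed NRT in the core-free case and then concede that this construction is ``the main obstacle'' without carrying it out. Note too that the pairing $Hc\leftrightarrow Hc^{-1}$ you propose is not well-defined on cosets (replacing $c$ by $hc$ sends $c^{-1}$ to $c^{-1}h^{-1}$, which in general lies in a different right coset), so even the non-self-paired part of the matching needs more care than you indicate. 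The paper sidesteps all of this: what is actually needed from finiteness is only the existence of a \emph{both-sided} transversal, not an inverse-closed one, and that is a century-old result.
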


\section{Proof of the Theorem \ref{mth}}
Let $G$ be a group and $H$ a subgroup of $G$. It is shown in \cite{hwc} and \cite{gam}
that if $H$ is finite subgroup of $G$ then there exists a common set of representatives for the left and right cosets of $H$ in $G$. Let us call such a transversal as \textit{both sided transversal}. In \cite[Theorem 3, p. 12]{hz}, it is observed that if the index $[G:H]$ of $H$ in $G$ is finite, then both sided transversal exists. O. Ore has generalized these results in \cite{ore}.

Let $S \in \mathcal{T}(G, H)$ and $\circ$ be the binary operation on $S$ defined in the first paragraph of Section 1. Let $x, y\in S$ and $h\in H$. Then $x.y = f(x, y)(x\circ y)$ for some $f(x, y)\in H$ and $x\circ y\in S$. Also $x.h=\sigma_x (h) x\theta h $ for some $\sigma_x (h)\in H $ and $x\theta h \in S$. This gives us a map $f:S\times S\rightarrow H$ and a map $\sigma :S\rightarrow H^H$ defined by $f((x,y))=f(x, y)$ and $\sigma (x)(h)=\sigma_x (h)$. Also $\theta$ is a right action of $H$ on $S$. The quadruple $(S,H,\sigma,f)$ is a $c$-groupoid (see \cite[Definition 2.1, p.71]{rltr}). Infact, every $c$-groupoid comes in this way (see \cite[Theorem 2.2, p.72]{rltr}). The same is observed in \cite{km} but with different notations (see \cite[Section 3, p. 289]{km}). We need following result of \cite{km} to prove Theorem \ref{mth}:

\begin{proposition}[\cite{km}, Proposition 3.5, p. 292]\label{p1}Let $S \in \mathcal{T}(G, H)$ and $(S,H,\sigma,f)$ be the associated $c$-groupoid. Then following are equivalent:
\begin{enumerate}
\item $\sigma_x:S\rightarrow S$ is surjective, for all $x \in S$.
\item The equation $x \circ X=1$, where $X$ is unknown, has a solution, for all $x \in S$.
\item $S$ is a both sided transversal. 
\end{enumerate}
\end{proposition}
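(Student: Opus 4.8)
The plan is to translate each of the three conditions into the language of cosets and then connect them through the duality between left and right cosets induced by inversion. Write $\overline{g}\in S$ for the unique representative of the right coset $Hg$, so that $x\circ y=\overline{xy}$ and $x\theta h=\overline{xh}$; I read condition (1) as asserting that the left translations $\sigma_x\colon y\mapsto x\circ y$ of the induced right loop are onto (this matches the ``$\sigma_x\colon S\to S$'' in the statement). First I would record the elementary reformulations. Since $\overline{g}=1$ exactly when $g\in H$, the equation $x\circ X=1$ has a solution in $S$ if and only if $x^{-1}H\cap S\neq\emptyset$, i.e.\ the left coset $x^{-1}H$ meets $S$. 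The map $g\mapsto g^{-1}$ carries the right coset $Hs$ onto the left coset $s^{-1}H$ and is a bijection between the two coset spaces; hence $\{x^{-1}H:x\in S\}$ is precisely the set of all left cosets, each occurring once. Consequently (2) is equivalent to the statement that $S$ meets every left coset, so (3)$\Rightarrow$(2) is immediate; and (1)$\Rightarrow$(2) is immediate as well, since a preimage of $1$ under a surjective $\sigma_x$ is exactly a solution of $x\circ X=1$.

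The heart of the proof, and the step I expect to be the main obstacle, is (2)$\Rightarrow$(3): upgrading ``$S$ meets every left coset'' to ``$S$ meets every left coset exactly once'', i.e.\ that $S$ is a left, hence a both sided, transversal. Meeting every left coset yields a surjection of $S$ onto the set of left cosets, and the two sets have the same cardinality $[G:H]$; but surjectivity forces bijectivity only in the finite case, so a purely cardinal argument fails when $[G:H]$ is infinite. To handle both hypotheses uniformly I would localise to double cosets: $G$ is the disjoint union of the double cosets $HgH$. Under either hypothesis each $HgH$ contains only finitely many right cosets and equally many left cosets, the common number being $[H:H\cap gHg^{-1}]\le|H|$ when $|H|<\infty$, and at most $[G:H]$ when $[G:H]<\infty$. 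Now $S\cap HgH$ has exactly that many elements (one per right coset inside $HgH$), every left coset lying in $HgH$ that meets $S$ meets it within $S\cap HgH$, and (2) makes the induced map from $S\cap HgH$ to the left cosets inside $HgH$ surjective. A surjection between finite sets of equal size is a bijection, so each such left coset meets $S$ exactly once; assembling over all double cosets shows $S$ is a left transversal, which is (3).

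It remains to close the cycle by proving (3)$\Rightarrow$(1). This is the one genuinely loop-theoretic step rather than a coset count: the right translations $R_x$ are automatically bijective because $S$ is a right quasigroup, but the left translations $\sigma_x$ are not visibly so, and I must show that both sidedness forces each $\sigma_x$ to be onto — equivalently, that the right loop induced by a both sided transversal is in fact a loop. Surjectivity of $\sigma_x$ says exactly that $\{\overline{xy}:y\in S\}=S$, i.e.\ that $xS$ meets every right coset of $H$. I would establish this from (3) by the same finiteness philosophy, reducing again to the relevant finite collections of cosets so that a surjection of finite sets is forced to be a bijection. This is the most delicate bookkeeping in the argument and is where I expect to spend the most care, precisely because $\sigma_x$ mixes the fixed element $x$ with the varying representative $y$ and therefore does not respect a single double coset as cleanly as the maps in the previous step.

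In summary, the two elementary implications (1)$\Rightarrow$(2) and (3)$\Rightarrow$(2) come straight from the definitions; the inversion duality identifies (2) with a covering property of the left cosets; and the single real obstacle — promoting solvability/surjectivity to a genuine bijection, i.e.\ to both sidedness — is overcome not by counting \emph{elements} but by counting \emph{cosets} inside each double coset, which are finite under either of the two hypotheses on $H$. The chain (1)$\Rightarrow$(2)$\Rightarrow$(3)$\Rightarrow$(1) then yields the stated equivalence.
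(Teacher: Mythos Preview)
The paper does not prove Proposition~\ref{p1}: it is quoted from \cite{km}, with the added remark that (2)$\Leftrightarrow$(3) also appears in \cite{ek}. There is therefore no argument in the present paper to compare yours against. Your treatment of (2)$\Leftrightarrow$(3) via inversion together with a double-coset pigeonhole is correct and self-contained.

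Your reading of condition~(1), however, conflicts with the paper's own definitions and renders the proposition false. In the paragraph just before the proposition, $\sigma_x$ is defined by $x\cdot h=\sigma_x(h)\,(x\theta h)$ with $\sigma_x(h)\in H$, so that $\sigma\colon S\to H^{H}$ and each $\sigma_x$ is a self-map of $H$, not of $S$; the ``$S\to S$'' in the displayed statement is a misprint. This is corroborated by the proof of Proposition~\ref{p2}, which deduces $\sigma_x=I_H$ and then invokes (1)$\Rightarrow$(3). You instead take $\sigma_x$ to be the left translation $L_x\colon y\mapsto x\circ y$ on $S$, making (1) the claim that $(S,\circ)$ is a loop. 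But both-sidedness does \emph{not} force this, even under the finiteness hypotheses: in $G=S_3$ with $H=\langle(12)\rangle$, the set $S=\{1,(13),(23)\}$ is a both-sided transversal, yet $L_{(13)}$ is not injective, since $(13)^{-1}H(13)=\langle(23)\rangle$ and $S$ contains two elements, $1$ and $(23)$, lying in the same right coset of $\langle(23)\rangle$. Hence the step you single out as ``the most delicate bookkeeping'' cannot be completed under your interpretation. With the intended $\sigma_x\colon H\to H$, by contrast, (3)$\Rightarrow$(1) is a one-liner: for any $h'\in H$ the left coset $(h')^{-1}xH$ meets the left transversal $S$ in some $t$, and writing $t=(h')^{-1}xh$ gives $\sigma_x(h)=h'$.
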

The equivalence of (2) and (3) has also been proved in \cite[Lemma 7*, p.30]{ek}

\begin{lemma}\label{l1}
Let $S,T \in \mathcal{T}(G, H)$ be isomorphic and $S$ be a both sided transversal. Then $T$ is also both sided transversal. 
\end{lemma}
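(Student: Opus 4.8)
The plan is to reduce everything to Proposition \ref{p1}. The point is that among the three equivalent conditions there, condition (2) --- namely that the equation $x \circ X = 1$ (with $X$ unknown) has a solution for every $x \in S$ --- refers only to the induced right loop structure $(S, \circ)$ and makes no mention of $G$, $H$, or the embedding. Hence it is preserved under isomorphism of right loops, and being a both sided transversal is thereby an isomorphism invariant of the right loop. This is really all that is needed.

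Concretely, I would proceed as follows. Let $\phi : S \rightarrow T$ be an isomorphism of the induced right loop structures; in particular $\phi$ is a bijection with $\phi(1) = 1$ and $\phi(x \circ y) = \phi(x) \circ \phi(y)$ for all $x, y \in S$. Since $S$ is a both sided transversal, the implication (3) $\Rightarrow$ (2) of Proposition \ref{p1} gives, for each $x \in S$, an element $X \in S$ with $x \circ X = 1$. Now take an arbitrary $t \in T$ and write $t = \phi(x)$ with $x \in S$; choosing $X \in S$ with $x \circ X = 1$, we get $t \circ \phi(X) = \phi(x) \circ \phi(X) = \phi(x \circ X) = \phi(1) = 1$, so $\phi(X) \in T$ solves $t \circ Y = 1$. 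Thus the equation $t \circ Y = 1$ is solvable for every $t \in T$, i.e. $T$ satisfies condition (2) of Proposition \ref{p1} (applied to $T$ and its associated $c$-groupoid). The implication (2) $\Rightarrow$ (3) of that proposition then yields that $T$ is a both sided transversal.

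There is no genuine obstacle here: once Proposition \ref{p1} is available, the only observation required is that its condition (2) is an intrinsic property of the right loop $(S,\circ)$ and therefore transports along the isomorphism $\phi$. The one thing worth being careful about is that Proposition \ref{p1} is being invoked twice, once for $S$ (direction (3) $\Rightarrow$ (2)) and once for $T$ (direction (2) $\Rightarrow$ (3)), each time with respect to the $c$-groupoid associated with the transversal in question; and that an isomorphism of right loops indeed sends the identity to the identity, so that the equation $x \circ X = 1$ is carried to the equation $\phi(x) \circ X = 1$.
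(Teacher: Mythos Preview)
Your argument is correct and is essentially identical to the paper's own proof: both invoke Proposition~\ref{p1} to pass from ``both sided transversal'' to ``$x\circ X=1$ solvable for all $x$'', transport this along the given isomorphism, and then invoke Proposition~\ref{p1} again for $T$. The only cosmetic difference is that the paper distinguishes the two induced operations by writing $\circ$ and $\circ'$, whereas you overload $\circ$.
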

\begin{proof} Let $\circ$ and $\circ^{\prime}$ be the induced binary operations on $S$ and $T$ respectively. Fix an isomorphism $p:S \rightarrow T$. Let $y \in T$ and $x=p^{-1}(y)$. Since $S$ is a both sided transversal, by Proposition \ref{p1} there exists $a \in S$ such that $x \circ a=1$. Hence $y \circ^{\prime} p(a)=p(x) \circ^{\prime} p(a)=p(1)=1$. Thus by Proposition \ref{p1}, $T$ is a both sided transversal. 
\end{proof}
\begin{lemma}\label{l2}
Let $G$ be a group and $H$ be a non-normal subgroup of $G$. Then there exists $S \in \mathcal{T}(G, H)$, which is not a left transversal of $H$ in $G$. 
\end{lemma}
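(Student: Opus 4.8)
The plan is to produce a single NRT that contains two elements of one left coset, which is the cheapest way to violate the left‑transversal property. Since $H$ is not normal in $G$, the inclusion $gHg^{-1}\subseteq H$ cannot hold for every $g\in G$ (that inclusion for all $g$ is equivalent to normality); hence there exist $g\in G$ and $h\in H$ with $ghg^{-1}\notin H$. From $ghg^{-1}\notin H$ and $1\in H$ I get at once $h\neq 1$, so $g\neq gh$; and I get $g\notin H$ and $gh\notin H$, since otherwise $ghg^{-1}$ would already lie in $H$.

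Next I would check that the three right cosets $H$, $Hg$ and $Hgh$ are pairwise distinct: $Hg\neq H$ and $Hgh\neq H$ because $g,gh\notin H$, while $Hg=Hgh$ would force $gh\in Hg$, i.e. $ghg^{-1}\in H$, contrary to the choice of $g$ and $h$. Therefore I may build a normalized right transversal $S\in\mathcal{T}(G,H)$ that simultaneously contains $1$, $g$ and $gh$: pick $1$, $g$, $gh$ as the representatives of the cosets $H$, $Hg$, $Hgh$ respectively, and choose one arbitrary representative from each of the remaining right cosets. Since $1\in S$, this $S$ is a genuine NRT.

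Finally, $g$ and $gh$ are two distinct elements of $S$ that both lie in the left coset $gH$, so $S$ meets $gH$ in more than one point and hence cannot be a left transversal of $H$ in $G$, which is exactly the conclusion of the lemma. I expect no real obstacle here beyond the bookkeeping in the second paragraph: the only thing that must be verified carefully is that $H$, $Hg$, $Hgh$ are distinct and that $1,g,gh$ are distinct, so that the desired $S$ actually exists. I would also remark that this argument uses neither finiteness of $\lvert H\rvert$ nor finiteness of $[G:H]$; those hypotheses of Theorem \ref{mth} are needed only for the other implications (for instance, for the existence of a both sided transversal).
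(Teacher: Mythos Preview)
Your proof is correct and follows essentially the same approach as the paper's: produce two elements lying in the same left coset but in distinct right cosets (both different from $H$), and include them together with $1$ in an NRT. The only cosmetic difference is that the paper phrases the starting point as ``there exists $x$ with $xH\neq Hx$'' and then argues (replacing $x$ by $x^{-1}$ if necessary) that one may pick $y\in xH\setminus Hx$, whereas you start directly from the conjugation criterion $ghg^{-1}\notin H$; your $g$ and $gh$ are precisely the paper's $x$ and $y$, and your formulation sidesteps the $x\mapsto x^{-1}$ step.
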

\begin{proof} Since $H \ntrianglelefteq G$, there exists $x \in G$ such that $xH \neq Hx$. We may assume that $xH \setminus Hx \neq \emptyset$, for $xH \subsetneq Hx$ if and only if $Hx^{-1} \subsetneq x^{-1}H$ (and so we may replace $x$ by $x^{-1}$, if necessary). Choose $y \in xH \setminus Hx$. Then $xH=yH$ but $Hx \neq Hy$. Let $S \in \mathcal{T}(G, H)$ containing $1,x,y$. Clearly $S$ is a right transversal but not left transversal of $H$ in $G$. 
\end{proof}
\begin{proposition}\label{p2} Let $G$ be a group and $H$ be a subgroup of $G$. If all $S \in \mathcal{T}(G, H)$ are $A_r$-transversals, then $H \trianglelefteq G$.
\end{proposition}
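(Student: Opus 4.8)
The plan is to prove the contrapositive: assuming $H$ is \emph{not} normal in $G$, I will exhibit an NRT $S$ of $H$ in $G$ which fails to be an $A_r$-transversal, i.e. for which $H \not\subseteq N_G(S)$. Since $H \subseteq N_G(S)$ amounts to $hSh^{-1} = S$ for all $h \in H$, it suffices to produce a single $h \in H$ and a single $S \in \mathcal{T}(G,H)$ with $hSh^{-1} \neq S$.

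First I would extract the needed data from non-normality. Because $H \ntrianglelefteq G$, there exist $h \in H$ and $g \in G$ with $hgh^{-1}g^{-1} \notin H$; equivalently, writing $x := hgh^{-1}$, the right cosets $Hg$ and $Hx$ are distinct. Two elementary remarks then follow: $g \notin H$ (otherwise $x \in H = Hg$), so $g \neq 1$ and hence $x = hgh^{-1} \neq 1$; and $H \neq \{1\}$ (the trivial subgroup being normal), so every right coset of $H$ has at least two elements.

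Then I would build $S$ directly by choosing coset representatives. Put $1 \in S$ as the representative of $H$ and $g \in S$ as the representative of $Hg$ (consistent, since $Hg \neq H$), and choose the representative of the coset $Hx$ to be some element other than $x$: this is possible because either $Hx \neq H$, in which case (recalling also $Hx \neq Hg$) its representative is free and $|Hx| = |H| \geq 2$ lets us avoid $x$, or $Hx = H$, in which case the representative is forced to be $1 \neq x$. Choosing arbitrary representatives for the remaining cosets yields $S \in \mathcal{T}(G,H)$ with $g \in S$ and $x \notin S$. Since $x = hgh^{-1} \in hSh^{-1}$ but $x \notin S$, we get $hSh^{-1} \neq S$, so $h \notin N_G(S)$ and $S$ is not an $A_r$-transversal, contradicting the hypothesis; hence $H \trianglelefteq G$. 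The construction is short, and the only point needing care is the coset bookkeeping — making sure the cosets $H$, $Hg$, $Hx$ named above are assigned representatives consistently, which is precisely why the cases $Hx = H$ and $Hx \neq H$ are distinguished; I anticipate no substantial obstacle beyond this.
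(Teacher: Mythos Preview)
Your argument is correct, but it takes a genuinely different route from the paper's. The paper proceeds forward rather than by contrapositive: assuming every $S\in\mathcal{T}(G,H)$ is an $A_r$-transversal, it observes that for $x\in S$ and $h\in H$ one has $h^{-1}xh\in S$, so the equation $xh=h(h^{-1}xh)$ forces $\sigma_x=I_H$; surjectivity of $\sigma_x$ then feeds into Proposition~\ref{p1} to conclude that every $S$ is a both-sided transversal, and Lemma~\ref{l2} finishes. Your construction instead bypasses the $c$-groupoid map $\sigma$, Proposition~\ref{p1}, and Lemma~\ref{l2} entirely: from non-normality you directly build an NRT $S$ containing some $g$ but omitting $hgh^{-1}$, witnessing $H\not\subseteq N_G(S)$. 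The paper's route has the virtue of reusing the machinery already set up for the main theorem (and in particular shows that the $A_r$-condition implies the both-sided condition, a fact of independent interest in that context), whereas your route is more self-contained and elementary, needing nothing beyond coset bookkeeping; the case split on whether $Hx=H$ is handled cleanly and the construction is sound.
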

\begin{proof} Assume that all $S \in \mathcal{T}(G, H)$ are $A_r$-transversals. Let $S \in \mathcal{T}(G, H)$. Let $x \in S$ and $h\in H$. Since $S$ is an $A_r$-transversal, $h^{-1}xh \in S$. Hence $xh=h(h^{-1}xh)$ implies that $\sigma_x=I_H$, for all $x \in S$, where $I_H$ is the identity map on $H$. By Proposition \ref{p1}, all $S$ are both sided transversals. Thus by Lemma \ref{l2}, $H \trianglelefteq G$.
\end{proof}
\begin{remark}\label{r1} The converse of Proposition \ref{p2} is not true. For example, let $G=Sym(3)$, the symmetric group of degree $3$. Let $H$ and $S$ be subgroups of $G$ of order $3$ and $2$ respectively. Then $H \trianglelefteq G$, $S \in \mathcal{T}(G,H)$ and $N_G(S)=S$. Thus $S$ is not an $A_r$-transeversal.
\end{remark}
\noindent \textit{Proof of Theorem \ref{mth}}:~ The statement (1) implies each of the statements (2)-(6) (for, all $S \in \mathcal{T}(G, H)$ are isomorphic to the group $G/H$). 
\\
$2\Rightarrow 1$: Follows from Lemma \ref{l2}.
\\
$3\Rightarrow 2$: Assume that (3) holds. Let $S$ be a both sided transversal of $H$ in $G$ (See first paragraph of Section 2). By Lemma \ref{l1}, all $S \in \mathcal{T}(G, H)$ are both sided transversals. 
\\
$4\Rightarrow 2$: Assume that (4) holds. Let $S \in \mathcal{T}(G, H)$. Since $S$ has RIP, there exists a map $r:S\rightarrow S$ such that $R_x^{-1}=R_{r(x)}$ for all $x \in S$. Fix $x \in S$. Then $R_xR_{r(x)}(x)=x$, that is, $(x \circ r(x))\circ x=x=1 \circ x$, where $\circ$ is the binary operation on $S$ defined in the first paragraph of Section 1. By right cancellation in $S$, $x \circ r(x)=1$. Hence by Proposition \ref{p1}, $S$ is a both sided transversal. Thus (2) holds. 
\\
$5\Rightarrow 2$: Assume that (5) holds. Let $S \in \mathcal{T}(G, H)$ and $x^{\prime}$ be the left inverse of $x$ in $S$. Since $S$ is RCC, there exists $z\in S$ such that $R_xR_{x^{\prime}}R_x^{-1}=R_z$. Hence $R_xR_{x^{\prime}}R_x^{-1}(x)=R_z(x)$, i.e. $x \circ z=x^{\prime} \circ x=1$, where $\circ$ is the binary operation on $S$ defined in the first paragraph of Section 1. Hence by Proposition \ref{p1}, $S$ is a both sided transversal. Thus (2) holds. 
\qed
\begin{center}\textbf{Acknowledgement}\end{center}
Authors are grateful to Prof. Yoav Segev for kindly pointing out an error in the earliar version of the manuscript. This resulted Remark \ref{r1}.


\end{document}